\def\LaTeX{\leavevmode L\raise.42ex
\hbox{\kern-.3em\size{\sf@size}{0pt}\selectfont A}\kern-.15em\TeX} \makeatother
\newtheorem{defi}{Definici\'{o}n}[section]
\newtheorem{cor}[defi]{Corollary}
\newtheorem{df}[defi]{Definition}
\newtheorem{lem}[defi]{Lemma}
\newtheorem{rem}[defi]{Remark}
\newtheorem{prop}[defi]{Proposition}
\newtheorem{thm}[defi]{Theorem}
\newcommand{\Hom}{\mbox{Hom}_{\mathcal{C}}(\tilde{T},}
\newcommand{\add}{\mbox{add}(\tau\tilde{T})}
\begin{document}

\begin{abstract}
  We are going to show that the representation dimension of a
cluster-concealed algebra $B$ is 3. We compute its representation
dimension by showing an explicit Auslander generator for the
cluster-tilted algebra.
\end{abstract}
\vskip -.5cm

\title[Representation dimension of  cluster-concealed algebras]
{Representation dimension of cluster-concealed algebras}

\author[Gonz\'alez Chaio]{Alfredo Gonz\'alez Chaio}
\address{Depto de Matem\'atica, FCEyN, Universidad Nacional de Mar del Plata,
7600, Mar del Plata, Argentina} \email{agonzalezchaio@gmail.com}

\author[Trepode]{Sonia Trepode}
\address{Departamento de Matem\'atica, Facultad de Ciencias Exactas y Naturales,
Funes 3350, Universidad Nacional de Mar del Plata, 7600 Mar del
Plata, Argentina} \email{strepode@mdp.edu.ar}

\keywords{Auslander generator, Cluster-concealed, Cluster-tilted algebra, Representation dimension}

\maketitle

\section{Introduction}
%\vskip -.1cm In 1971,
Auslander \cite{A} introduced the concept of
representation dimension for artin algebras, motivated by the
connection of arbitrary artin algebras with representation finite
artin algebras. He expected this notion to give a reasonable way of
measuring how far an artin algebra is from being of representation
finite type. The representation dimension is a Morita-invariant of
artin algebras and characterizes the artin algebras of finite
representation type. It was shown by Auslander in \cite{A} that  an
algebra $A$ is representation-finite if and only if rep.dim$A \leq
2$. Later, Iyama proved in \cite{I} that the representation
dimension of an artin algebra is always finite,
 using a relationship with quasihereditary algebras.
The interest in representation dimension revived when Igusa and
Todorov showed that the representation dimension is related to the
finitistic dimension
 conjecture. They proved that if an artin algebra has representation
dimension at most three, then its finitistic dimension is finite
\cite{IT}. Recently, Rouquier showed in \cite{R} an exterior algebra
with representation dimension 4. In fact, he has constructed
examples of algebras with arbitrarily large representation
dimensions.
 \vskip .3cm
 On the other hand, Cluster algebras were introduced by
Fomin-Zelevinsky \cite{FZ}. Later, Marsh-Reineke-Zelevinsky
\cite{MRZ} found that there is a deep connection between cluster
algebras and quiver representations.
Buan-Marsh-Reineke-Reiten-Todorov \cite{BMRRT} defined the cluster
category and developed a tilting theory using a especial class of
objects,
 namely the cluster tilting objects. In \cite{BMR2}, Buan-Marsh-Reiten
 introduced the cluster-tilted
 algebras as the endomorphism algebras End$_\mathcal{C}(T )^{op}$
 of a cluster-tilting object
 $T$ in a cluster category $\mathcal{C}$. These algebras are
 connected to tilted algebras, which are the algebras of the form
 End$_H(T)^{op}$ for a tilting module $T$ over a hereditary
 algebra $H$. This motivates us to investigate the relationship between
 the module theory of cluster-tilted algebras and the module theory of
 hereditary algebras.
\vskip .3cm
 A cluster concealed algebra is given by $B = $
End$_\mathcal{C}(\tilde{T})^{op}$ where $T$ is a cluster tilting
object induced by a postprojective tilting $H$-module. The objective
of this paper is to compute the representation dimension of
cluster-concealed algebras by showing an explicit Auslander
generator. In order to do this, tilting and torsion theory of
hereditary algebras became very useful tools. Also the concept of
slices and local slices, the last ones defined in \cite{ABS}, became
a key tool to find Auslander generators for cluster-concealed
algebras. The notions of covariantly and contravariantly finite
categories\cite{AS} are very useful for the proof,  together with
their relationship with torsion pairs in tilted algebras \cite{AR}.
\vskip .3cm
 In section 2, we give some notations and preliminary
concepts needed for proving our main result. In section 3, we
present our main theorem and the previous results required for
proving it.

\section{preliminaries}

Through this paper we are going to use the following notation. $A$
denotes a finite dimensional algebra over an algebraically closed
field and mod $A$  represents the category of all finitely generated
right $A$-modules. $H$ denotes a finite dimensional hereditary
algebra and we denote the bounded derived category of $H$ by
$\mathcal{D}^{b}(H)$ and by [ ] the shift functor. We will often
identify the objects concentrated in degree zero with the
corresponding $H$-module.

                %%%%%%%%%% Aca vienen las subsecciones %%%%%%%%%%%%%

\subsection{Tilting theory}
\vspace{0.3cm}

We  start this section by giving the definition of tilting module.
For more details on tilting modules see  \cite{HR}. Let $A$ be  an
algebra and $T$ an $A$-module. $T$ is said to be a tilting module in
mod$A$ if it satisfies the following conditions
\begin{itemize}
\item [(a)] $pdT \leq 1$.
\item [(b)] Ext$^{1}_{A}(T, T) = 0$.
\item [(c)] $0 \rightarrow A \rightarrowT_{0} \rightarrow T_{1} \rightarrow 0$ with $T_{0}$ and $T_{1}$ in add$T$.
\end{itemize}

If a module satisfies condition (b), we say that the module is
exceptional, or equivalently, we say that $T$ is a tilting module if
$pdT \leq 1$, is rigid and has $n$ non-isomorphic indecomposable
direct summands, where $n$ is the number of non-isomorphic simple
modules.

\begin{rem}
We recall that for a hereditary algebra $H$, an $H$-module $T$ is
said to be a tilting module if $T$ is rigid and has $n$
non-isomorphic indecomposable direct summands.
\end{rem}

A tilting module is said to be basic if all of its direct summands
are non-isomorphic. The endomorphism ring of a tilting module over a
hereditary algebra is said to be a $\mathbf{tilted}$ $\mathbf{
algebra}$. In particular, hereditary algebras are tilted algebras.
\vskip .1cm
%%%%  Slices  %%%%
   We recall that a $\mathbf{path}$
from $X$ to $Y$ is a sequence of indecommposable modules and
non-zero morphisms $X = X_0 \rightarrow X_1 \rightarrow ...
\rightarrow X_t = Y$.  Given $X,Y \in \mbox{ind}A$, we say that $X$
is a $\mathbf{predecessor}$ of $Y$ or that $Y$ is a
$\mathbf{successor}$ of $X$, provided that there exists a path from
$X$ to $Y$. A tilting module $T$ is $\mathbf{convex}$ if, for a
given pair of indecomposable summands of $T$, $X$ $Y$  in add $T$,
any path from $X$ to $Y$ contains only indecomposable modules in add
$T$. Following \cite{APT}, we say that a set $\Sigma_{T}$ in mod $A$
is a $\mathbf{complete}$ $\mathbf{slice}$ if
 $T = \bigoplus_{M \in \Sigma_{T}}M$ is a convex tilting module
with End$_A T$ hereditary. For the original definition of complete
slices, we refer to \cite{Ri1}, \cite{Ri2}. Tilted algebras are
characterized by the existence of a complete slice in its module
category.

\vskip .1cm
 For a given tilting module in mod$H$ there exist two
full disjoint subcategories of mod$H$, namely

$$\mathcal{F}(T)=\{X \in \mbox{mod}H \mbox{ such that } \mbox{Hom}_H(T,X)=0 \}$$
$$\mathcal{T}(T)=\{X \in \mbox{mod}H \mbox{ such that } \mbox{Ext}^{1}_H(T,X)=0 \}$$
\vskip .1cm The free torsion class and torsion class respectively.
\vskip .1cm
 Furthermore, if $T$ is a convex tilting, then
mod$H=\mathcal{F}(T)\bigcup \mathcal{T}(T)$. We have that, in this
case,  $\mathcal{F}(T)$ is closed under predecessors and
$\mathcal{T}(T)$ is closed under successors.

\subsection{Cluster categories and cluster tilted Algebras}

For the convenience of the reader, we start this section recalling
some definitions and results of cluster categories from
\cite{BMRRT}. Let $\mathcal{C}$ be the  $\mathbf{cluster}$
$\mathbf{category}$ associated to $H$ and given by
$\mathcal{D}^{b}(H)/F$, where $F$ is the composition functor
$\tau_{\mathcal{D}}^{-1}[1]$. We  represent by $\tilde{X}$ the class
of an object $X$ of $\mathcal{D}^{b}(H)$ in the cluster category. We
recall that Hom$_{\mathcal{C}}(\tilde{X},\tilde{Y})= \bigoplus_{i
\in \mathbb{Z}}\mbox{Hom}_{\mathcal{D}^b(H)}(X,F^iY).$ We recall
that $S = \mbox{ind}H \bigcup H[1]$ is the fundamental domain of
$\mathcal{C}$. If $X$ and $Y$ are objects in the fundamental domain,
then we have that $\mbox{Hom}_{\mathcal{D}^b(H)}(X,F^iY)=0$ for all
$i \neq 0,1$. Moreover, any object in $\mathcal{C}$ is of the form
$\tilde{X}$ with $X \in S$. \vskip .1cm For $\mathcal{C}$, we say
that $\tilde{T}$ in $\mathcal{C}$ is a $\mathbf{tilting}$
$\mathbf{object}$ if Ext$^{1} _{\mathcal{C}}(\tilde{T}, \tilde{T}) =
0$ and $\tilde{T}$ has a maximal number of non-isomorphic direct
summands. A tilting object in $\mathcal{C}$ has finite summands.
\vskip .1cm

There exists the following nice correspondence between tilting
modules and basic tilting objects. \vskip .1cm
 \noindent$
\mathbf{Theorem}$ \cite[Theorem 3.3.]{BMRRT} {\it\begin{itemize}
\item[(a)] Let T be a basic tilting object in $\mathcal{C} = \mathcal{D}^b(H)/F$, where $H$
is a hereditary algebra with $n$ simple modules.
\begin{itemize}
\item[(i)] $T$ is induced by a basic tilting module over a hereditary algebra $H'$,
derived equivalent to $H$.
\item[(ii)] $T$ has $n$ indecomposable direct summands.
\end{itemize}
\item[(b)] Any basic tilting module over a hereditary algebra $H$ induces a basic tilting
object for $\mathcal{C} = \mathcal{D}^b(H)/F$.
\end{itemize}}
\vskip .1cm
 In \cite{BMR2}, Buan, Marsh and Reiten introduced the
cluster-tilted algebra. Let $\tilde{T}$ be a tilting object over the
cluster category $\mathcal{C}$, we recall that $B$ is the
$\mathbf{cluster}$ $\mathbf{tilted}$ $\mathbf{algebra}$ if $B=$
End$_{\mathcal{C}}(\tilde{T})$. It is also shown in \cite{BMR2}
that, if $\tilde{T}$ is a tilting object in $\mathcal{C}$, the
functor $\Hom \,\,)$ induces an equivalence of categories between
$\mathcal{C}/\add$ and mod$B$.

Thus using the equivalence above, we can compute the Hom$_B(X',Y')$
in terms of the cluster category $\mathcal{C}$.
$$Hom_{B}(\Hom \tilde{X}),\Hom \tilde{Y}))\simeq \mbox{Hom}_{\mathcal{C}}(\tilde{X},\tilde{Y})/\add,$$ where $X'= \Hom \tilde{X})$ and $Y'= \Hom \tilde{Y})$ are $B$-modules.

\begin{rem} \label{2.4}
Note that, if $X' = \Hom \tilde{X})$ and $Y'= \Hom \tilde{Y})$ ,
then we have that $Hom_B(X',Y')=0$ if for every $f:\tilde{X}
\rightarrow \tilde{Y}$, $f$ factors through $\add$ in $\mathcal{C}$.
\end{rem}

\subsection{Covariantly and contravariantly categories}

In this section, we recall some facts on approximation morphisms and
covariantly and contrava-riantly finite categories from \cite{AS},
also see \cite{AR}.

First we begin by recalling the notion of approximations. Let
$\mathcal{X}$ be a full subcategory of mod$A$ closed under direct
summands and isomorphisms. We say that an A-module $Y$ is
contravariantly finite over $\mathcal{X}$ if there exists a morphism
$f:X \rightarrow Y$, with $X \in \mathcal{X}$, such that
Hom$_A(X',f):\mbox{Hom}_A(X',X) \rightarrow \mbox{Hom}_A(X',Y)$ is
surjective for all $X' \in \mathcal{X}$. We say that $Y$ is
covariantly finite over $\mathcal{X}$ if there exists a morphism
$g:Y \rightarrow X$, with $X \in \mathcal{X},$ such that
Hom$_A(g,X'):\mbox{Hom}_A(X,X') \rightarrow \mbox{Hom}_A(Y,X')$ is
surjective for all $X' \in \mathcal{X}$. Furthermore, we  say that
the morphism $f:X \rightarrow Y$ is a right
$\mathcal{X}$-approximation of $Y$ and that the morphism $g:Y
\rightarrow X$ is a left $\mathcal{X}$-approximation of $Y$. A
morphism $f:B \rightarrow C$ is right minimal if $g:B \rightarrow B$
an endomorphism, and $f=fg$ then $g$ is an automorphism. A right
$\mathcal{X}$-approximation $f:X \rightarrow Y$ of $Y$ is minimal if
$f$ is right minimal. Two right minimal $\mathcal{X}$-approximation
are isomorphic.

Moreover, by \cite[Proposition 3.9]{AS}, an $A$-module $Y$ is
contravariantly finite over $\mathcal{X}$ if and only if there
exists a right $\mathcal{X}$-approximation of $Y$ and an $A$-module
$Y$ is covariantly finite over $\mathcal{X}$ if and only if there
exists a left $\mathcal{X}$-approximation of $Y$.

%% covariantly finite categories%%%% PREGUNTAR

The category $\mathcal{X}$ is called covariantly (contravariantly) finite in mod$A$
if every $A$-module $Y$ has a left (right) minimal $\mathcal{X}$-approximation.

Recall from \cite{AR} that Gen$(T)=\mathcal{T}(T)$ is covariantly finite and
$\mathcal{F}(T)=$Cogen$(\tau T)$ is contravariantly finite.

\subsection{Representation dimension}
We recall that an $A$-module $M$ is a generator for mod$A$ if for
each $X \in$ mod$A$, there exists an epimorphism $M' \rightarrow X $
with $M' \in $add$(M)$. Observe that $A$ is a generator for mod$A$.
Dually, we say that an $A$-module $M$ is a cogenerator if for each
$Y \in $ mod$A$ there exists a monomorphism $Y \rightarrow M'$ with
$M' \in $add$(M)$. Note that $DA$ is a cogenerator for mod$A$. In
particular, any module $M$ containing every indecomposable
projective and every indecomposable injective module as a summand is
a generator-cogenerator module for mod$A$.

The original definition of representation dimension (we will note it
by rep.dim) of an artin algebra $A$ is due to Auslander. For more
facts on this topic, we refer the reader to \cite{A}. The following
is a nice characterization of representation dimension, in the case
that $A$ is a non semisimple algebra, also due to Auslander. This
characterization is given as follows
$$ \mbox{rep.dim}A = \mbox{inf} \,\{ \mbox{gl.dim End}_A(M) / M \mbox{is a generator-cogenerator for mod} A \}$$

A module $M$ that reaches the minimum is called an
$\mathbf{Auslander \, generator}$ and $\mbox{gl.dim End}_A(M)=
\mbox{rep.dim}A$ if $M$ is an Auslander generator.

The representation dimension can also be defined in a functorial
way, which will result us more convenient. The next definition (see
\cite{APT},\cite{CP},\cite{R}) will be very useful for the rest of
this work.

\begin{df}\label{df. repdim}
The representation dimension rep.dim$A$ is the smallest integer $i
\geq 2$ such that there is a module $M \in \mbox{mod}A$ with the
property that, given any $A$-module $X$ \begin{itemize}
\item [(a)], there is an exact sequence of
$$0 \rightarrow M^{-i+2} \rightarrow M^{-i+3} \rightarrow ... \rightarrow M^{0}  \stackrel{f}\rightarrow X \rightarrow 0$$

with $M^{j} \in \mbox{add}(M)$ such that the sequence

$$ 0 \rightarrow \mbox{Hom}_{A}(M, M^{-i+2}) \rightarrow ... \rightarrow  \mbox{Hom}_{A}(M, M^{0}) \rightarrow \mbox{Hom}_{A}(M, X) \rightarrow 0$$

is exact.
\item [(b)] there is a exact sequence
$$0 \rightarrow X \stackrel{g}\rightarrow M'_{0} \rightarrow M'_{1} \rightarrow ... \rightarrow M'_{i-2} \rightarrow 0$$

with $M'_{j} \in \mbox{add}(M)$ such that the sequence

$$ 0 \rightarrow \mbox{Hom}_{A}(M'_{-i+2},M) \rightarrow ... \rightarrow  \mbox{Hom}_{A}(M'_{0}, M) \rightarrow \mbox{Hom}_{A}(X, M) \rightarrow 0 $$

is exact.
\end{itemize}
\end{df}
following \cite{CP}, we say the module $M$ has the $i$-resolution property and that the sequence $0 \rightarrow M^{-i+2} \rightarrow M^{-i+3} \rightarrow ... \rightarrow M^{0}  \rightarrow L \rightarrow 0$ is an add$M$-approximation of $L$ of length $i$. Note, that $f: M_0 \rightarrow X$ is a right add$(M)$-approximation of $X$ and $g:X \rightarrow M'_0$ is a left add$(M)$-approximation of $X$.

\begin{rem}\label{1}
Note that either condition (a) or (b) implies that $\mbox{gl.dim End}_A(M)$  $\leq i+1$. Then, if $M \in \mbox{mod}A$ and $i \geq 2$, the following statements are equivalent
\begin{itemize}
\item $M$ satisfies (a) and (b) of the definition.
\item $M$ satisfies (a) and $M$ contains an injective cogenerator as a direct summand.
\item $M$ satisfies (b) and $M$ contains a projective generator as a direct summand.
\end{itemize}
\end{rem}

 We have rep.dim$A=$rep.dim$A^{op}$.% Para que esta esto?

%%%%%%%%%%%%%%%%%%%%%%%%%%%%%%%%%%%%%%%%%%%%%%%%%%%%%%%%%%%%%%%%%%%%%%%%%%%%%%%%%%%%%%%%%%%%%%%%%%%%%%%%%%%%%%%%%%%%%%%%%%%%%%%%%%%%%%%%%%%%%%%%%%%%%%%%%%%%%%%%%%%%%%%%%%

%PARECE que ya esta!!!!!!%%%%%%%%%%%%%%%%%%%%%%%%%%%%%%%%%%%%%%%%%%%%%%%%%%%%%%%%%%%

\section{Representation dimension for cluster concealed algebras}

\vspace{0.3cm}

In this section we  present our main result. Let $\tilde{T}$ be a
tilting object in a cluster category $\mathcal{C}$ and let be
$B=$End$_{\mathcal{C}}(\tilde{T})$, the associated cluster-tilted
algebra. To simplify some proofs we  choose without lose of
generality
 $T$ and $\tau T$ without projective summands.  As we are
interested in compute the representation dimension of cluster
concealed algebras, through this section $T$ will be a
postprojective tilting module on mod$H$ and $\tilde{T}$  denotes the
class of $T$ on the cluster category $\mathcal{C}$. Recall that
$\tilde{T}$ is a tilting object in $\mathcal{C}$. We consider $H$,
and thus $B$, of infinite representation type.

We consider a complete slice $\Sigma$ on the postprojective
component of the hereditary algebra $H$, such that $ \Sigma \in
\mathcal{T}(T)$. Note that  this condition implies $\Sigma$ does not
have projective summands. We can construct a convex tilting module
of the form $U = \bigoplus_{E \in \Sigma}E$, and we  denote
$\mathcal{T}(\Sigma)$ to the category $\mathcal{T}(U)$ and
$\mathcal{F}(\Sigma)$ to $\mathcal{F}(U)$. Since $\Sigma \in
\mathcal{T}(T)$ also $U \in \mathcal{T}(T)$ and thus we have
$\mathcal{T}(\Sigma) \subset \mathcal{T}(T)$. Then we have $\Sigma'
= \Hom \tilde{\Sigma})$, the image of $\tilde{\Sigma}$ by the
BMR-equivalence \cite{BMR2}, which is a local slice in mod$B$
\cite{ABS}.

Using the BMR-equivalence, we can describe the indecomposable modules in mod$B$ as follows.
Let $Y$ be an indecomposable in mod$B$,  then we have one of the following cases
\begin{itemize}
\item[(1)] $Y \simeq \Hom Q)$ with $Q = \widetilde{P[1]}$ with $P$ projective indecomposable in mod$H$.
\item[(2)] $Y \simeq \Hom \tilde{G})$ with $G \in \mathcal{F}(\Sigma) \subset \mbox{mod}H$.
\item[(3)] $Y \simeq \Hom \tilde{X})$ with $X \in \mathcal{T}(\Sigma) \subset \mbox{mod}H$.

\end{itemize}

\vspace{0.3cm}

Observe that if $Y$ is indecomposable also $Q,X$ or $G$ is
indecomposable. Since we choose $\Sigma$ in the postprojective
component of mod$H$, we have that $\mathcal{F}(\Sigma)$ has only a
finite number of non isomorphic indecomposable $H$-modules, then we
only have a finite number of isomorphism classes of indecomposable
$B$-modules $Y$ satisfying the second case. As well as in the first
case, because we only have a finite number of indecomposable
projective $H$-modules. This implies we can consider the following
modules, $Q'= \Hom \widetilde{H[1]})$ and
$\mathcal{G}=\bigoplus_{G}\Hom G)$ with $G$ indecomposable in
$\mathcal{F}(\Sigma)$, in mod$B$.

We want to determine an Auslander generator for mod$B$, this is a module $M$ with the
properties of Definition \ref{df. repdim}, so we want a module which provides
add$(M)$-approximations for every one of the modules in the previous cases. Since by
the Remark \ref{1}, a module $M$ satisfies Definition \ref{df. repdim} if satisfies
condition $(a)$ and contains a cogenerator as a direct summand, we will focuses on
find modules that satisfies condition $(a)$.
If we let $M'$ to be the module $\mathcal{G} \oplus Q'$, clearly, $M'$ approximates
trivially the modules of the first and second case.

Hence we will concentrate from now on, in the modules of the third case.
For a given complete slice $\Gamma$ we consider the set ${\mathcal{D}_{\Gamma}}$ defined as
$${\mathcal{D}_{\Gamma}}= \{ Y \in \mbox{mod}B \mbox{ such that } Y = \Hom \tilde{X})
 \mbox{ with } X \in \mathcal{T}(\Gamma) \},$$

that is the modules in ${\mathcal{D}_{\Sigma}}$ are the ones of the third case.
Our objective now is to approximate the modules of ${\mathcal{D}_{\Sigma}}$ by $\Sigma'$.

Let $Y \in {\mathcal{D}_{\Sigma}}$, then we have
$Y = \mbox{Hom}_{\mathcal{C}}(\tilde{T},\tilde{X})$ with $X \in \mathcal{T}(\Sigma) \subset \mbox{mod}H$.

Now, $X \in \mathcal{T}(\Sigma)=\mbox{Gen}(\Sigma)$ then there exist an epimorphism $f$
$$f:E \rightarrow X \rightarrow 0$$
\vspace{0.1cm}

\noindent a right $\mbox{add}(\Sigma)$-approximation of $X$.
Recall that a complete slice on a tilted algebra induces a convex tilting module, then since $H$ is hereditary and $X \in \mbox{Gen}(\Sigma)$ by \cite[Proposition :)]{APT}, we have that $K = \mbox{Ker}f \in \mbox{add}(\Sigma)$ and hence that the exact short sequence

$$0\rightarrow K \rightarrow E \rightarrow X \rightarrow 0  $$
\vspace{0.1cm}

is an $\mbox{add}(\Sigma)$-resolution of length two for $X$. We want to construct a similar exact sequence in mod$B$ for $Y$. The idea is to prove that the image of this sequence in $\mathcal{C}$ induces such a sequence in mod$B$.

This short exact sequence induces a triangle in $\mathcal{D}^{b}(H)$

$$ K \rightarrow E \rightarrow X \rightarrow K[1]$$
\vspace{0.1cm}

which induces a triangle in the cluster category $\mathcal{C}$ by taking the respective quotient classes

$$ \tilde{K} \rightarrow \tilde{E} \rightarrow \tilde{X} \rightarrow \tau\tilde{K} \, \, \, (*_1)$$
\vspace{0.1cm}

Our objective is to study the image of this triangle in mod $B$ by the functor $\Hom \, \, )$

\begin{lem}\label{3.1}
Let $E$ be a module in add$(\Sigma)$ and $X \in \mathcal{T}(\Sigma)$. Then Hom$_\mathcal{C}(\tilde{X},\tau \tilde{E})= \mbox{Hom}_{\mathcal{D}^{b}(H)}(X,E[1])$.
\end{lem}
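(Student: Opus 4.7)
The plan is to unfold the cluster-category Hom via the formula
$$\mbox{Hom}_{\mathcal{C}}(\tilde{X}, \tilde{Y}) = \bigoplus_{i \in \mathbb{Z}} \mbox{Hom}_{\mathcal{D}^{b}(H)}(X, F^{i}Y)$$
with $F = \tau_{\mathcal{D}}^{-1}[1]$, applied to $\tilde{Y} = \tau\tilde{E}$. Since $E \in \mbox{add}(\Sigma)$ and $\Sigma$ has no projective summands, $\tau_{\mathcal{D}}E$ coincides with the usual Auslander-Reiten translate $\tau E$ in $\mbox{mod}\,H$; in particular both $X$ and $\tau E$ lie in the fundamental domain $S = \mbox{ind}\,H \cup H[1]$. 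By the remark recalled in Section~2.2, $\mbox{Hom}_{\mathcal{D}^{b}(H)}(X, F^{i}\tau E) = 0$ for $i \neq 0, 1$, so the direct sum reduces to the two summands corresponding to $i = 0$ and $i = 1$.

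For $i = 1$: since $\tau_{\mathcal{D}}$ is an autoequivalence, $F\tau_{\mathcal{D}}E = \tau_{\mathcal{D}}^{-1}[1]\tau_{\mathcal{D}}E = E[1]$, so this summand is exactly $\mbox{Hom}_{\mathcal{D}^{b}(H)}(X, E[1])$, which is the right-hand side of the lemma. For $i = 0$, the summand becomes $\mbox{Hom}_{\mathcal{D}^{b}(H)}(X, \tau E) = \mbox{Hom}_{H}(X, \tau E)$. Everything therefore reduces to proving that this last Hom is zero.

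For the vanishing I would use the torsion-theoretic setup attached to $\Sigma$. By the Brenner-Butler description, $\mathcal{F}(\Sigma) = \mbox{Cogen}(\tau\Sigma)$, so $\tau E \in \mbox{add}(\tau\Sigma) \subset \mathcal{F}(\Sigma)$; by definition of $\mathcal{F}(\Sigma)$ this says $\mbox{Hom}_{H}(\Sigma, \tau E) = 0$. Since $X \in \mathcal{T}(\Sigma) = \mbox{Gen}(\Sigma)$, there is an epimorphism $\Sigma^{n} \rightarrow X$ for some $n$; applying the contravariant functor $\mbox{Hom}_{H}(-, \tau E)$ yields an injection
$$0 \rightarrow \mbox{Hom}_{H}(X, \tau E) \rightarrow \mbox{Hom}_{H}(\Sigma^{n}, \tau E) = 0,$$
so $\mbox{Hom}_{H}(X, \tau E) = 0$, which finishes the proof. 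The step I expect to need the most care is the identification $F\tau_{\mathcal{D}}E = E[1]$ together with the reduction to $i \in \{0, 1\}$: both rely on the convention that $\Sigma$ (and hence $E$) has no projective summand, so that $\tau E$ really lives in degree zero of $\mathcal{D}^{b}(H)$. Once that is established, the vanishing of the $i=0$ term is pure torsion theory.
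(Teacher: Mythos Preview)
Your proof is correct and follows essentially the same route as the paper: expand the cluster Hom via the fundamental-domain formula, use that $\Sigma$ has no projective summands so $\tau E\in\mbox{mod}\,H$, reduce to the summands $i=0,1$, identify the $i=1$ term with $\mbox{Hom}_{\mathcal{D}^b(H)}(X,E[1])$, and kill the $i=0$ term via the torsion pair. The only difference is cosmetic: the paper dispatches $\mbox{Hom}_H(X,\tau E)=0$ in one line by invoking that $(\mathcal{T}(\Sigma),\mathcal{F}(\Sigma))$ is a torsion pair, whereas you unfold this via $\mathcal{T}(\Sigma)=\mbox{Gen}(\Sigma)$ and an explicit epimorphism $\Sigma^n\to X$; both arguments are equivalent.
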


\begin{proof}
Recall that $\Sigma$ do not have projective summands, hence, since $E$ is in add$(\Sigma)$, $E$ is not projective and thus $\tau E$ is in mod$H$.

$$\mbox{Hom}_{\mathcal{C}}(\tilde{X},\tau\tilde{E})=\bigoplus_{i \in Z}\mbox{Hom}_{\mathcal{D}^{b}(H)}(X,F^{i}\tau E).$$

since $X$ and $\tau E$ are $H$-modules, thus $\tilde{X}$ and $\tau\tilde{E}$ are in the fundamental domain of $\mathcal{C}$ therefore

$$\mbox{Hom}_{\mathcal{C}}(\tilde{X},\tau\tilde{E})=
\mbox{Hom}_{\mathcal{D}}(X,\tau E) \oplus \mbox{Hom}_{\mathcal{D}}(X,E[1])$$
\vspace{0.2cm}

\noindent where all the others summands are zero.

We are going to see that also
$\mbox{Hom}_{\mathcal{D}^{b}(H)}(X,\tau E)$ is zero.

 Now $ \mbox{Hom}_{\mathcal{D}^{b}(H)}(X,\tau E)=\mbox{Hom}_{H}(X,\tau E)$ , since $X$ and $\tau E$ are $H$ modules.
Moreover, since $E \in$ add$(\Sigma)$ we have that $\tau E \in \mathcal{F}(\Sigma)$ and $X \in \mathcal{T}(\Sigma)$.

Hence $\mbox{Hom}_{\mathcal{D}^{b}(H)}(X,\tau E) = \mbox{Hom}_{H}(X,\tau E)= 0$ because  $(\mathcal{F},\mathcal{T})$ is a  torsion pair.
\end{proof}

we are in conditions to show that the modules in ${\mathcal{D}_{\Sigma}}$ are generated by modules in $\mbox{add}(\Sigma')$.

\begin{prop} \label{prop}
Let $T$ be a postprojective tilting module over mod$H$. Let $\Sigma$
be a complete slice on the postprojective component of mod$H$ such
that $\Sigma$ is contained on  $\mathcal{T}(T)$. Then
\begin{itemize}
  \item[(i)] ${\mathcal{D}_{\Sigma}} \subset \mbox{Gen}({\Sigma'})$ in mod$B$.
  \item[(ii)] Moreover, consider $\tau^{-1}\Sigma$, wich is also a complete slice, there exist a short exact sequence of the form
  $$ 0\rightarrow K' \rightarrow E' \rightarrow Y \rightarrow 0$$
  with $E',K' \in \mbox{add}(\Sigma')$ for every $Y \in {\mathcal{D}_{\tau^{-1}\Sigma}}$.
\end{itemize}
\end{prop}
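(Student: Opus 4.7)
The plan is to lift the natural right $\mbox{add}(\Sigma)$-approximation of $X$ in $\mbox{mod}\,H$ to an $\mbox{add}(\Sigma')$-approximation of $Y$ in $\mbox{mod}\,B$ via the triangulated structure of $\mathcal{C}$. Write $Y = \mbox{Hom}_{\mathcal{C}}(\tilde T, \tilde X)$. Because $X$ lies in $\mathcal{T}(\Sigma)=\mbox{Gen}(\Sigma)$ in case (i), or in the smaller torsion class $\mathcal{T}(\tau^{-1}\Sigma)\subset\mathcal{T}(\Sigma)$ in case (ii), a right $\mbox{add}(\Sigma)$-approximation $f: E\to X$ exists and is surjective. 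Since $U=\bigoplus_{E\in\Sigma}E$ is a convex tilting module over the hereditary algebra $H$, the cited APT-result guarantees that $K=\ker(f)$ also lies in $\mbox{add}(\Sigma)$. The resulting short exact sequence $0\to K\to E\to X\to 0$ sits inside $\mathcal{T}(\Sigma)\subset\mathcal{T}(T)$ and lifts to the triangle $(*_1)$ in $\mathcal{C}$.

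Applying $\mbox{Hom}_{\mathcal{C}}(\tilde T,-)$ to $(*_1)$ produces the long exact sequence in $\mbox{mod}\,B$:
$$\mbox{Hom}_{\mathcal{C}}(\tilde T,\tau^{-1}\tilde X)\to\mbox{Hom}_{\mathcal{C}}(\tilde T,\tilde K)\to\mbox{Hom}_{\mathcal{C}}(\tilde T,\tilde E)\to Y\to\mbox{Hom}_{\mathcal{C}}(\tilde T,\tau\tilde K),$$
with $\mbox{Hom}_{\mathcal{C}}(\tilde T,\tilde K)$ and $\mbox{Hom}_{\mathcal{C}}(\tilde T,\tilde E)$ in $\mbox{add}(\Sigma')$ by the BMR-equivalence. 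For (i), it is enough to show the rightmost map $Y\to\mbox{Hom}_{\mathcal{C}}(\tilde T,\tau\tilde K)$ is zero. The plan is to decompose any $g:\tilde T\to\tilde X$ via the fundamental-domain splitting $\mbox{Hom}_{\mathcal{C}}(\tilde T,\tilde X)=\mbox{Hom}_H(T,X)\oplus\mbox{Ext}^1_H(T,\tau^{-1}X)$ and to track its composition with the connecting morphism $\tilde X\to\tau\tilde K$. The Hom-contribution vanishes because $f: E\to X$ is a right $\mbox{add}(\Sigma)$-approximation and $\mbox{Ext}^1_H(T,K)=0$ (since $K\in\mbox{add}(\Sigma)\subset\mathcal{T}(T)$); the Ext-contribution lands, after one application of $F=\tau^{-1}[1]$, inside $\mbox{Ext}^2_H(T,-)$, which vanishes because $H$ is hereditary.

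For (ii), the stronger statement (a short exact sequence in $\mbox{mod}\,B$) additionally requires injectivity of $\mbox{Hom}_{\mathcal{C}}(\tilde T,\tilde K)\to\mbox{Hom}_{\mathcal{C}}(\tilde T,\tilde E)$, i.e. the vanishing of the leftmost map in the above long exact sequence. The same bookkeeping applies: a symmetric version of Lemma 3.1 gives $\mbox{Hom}_H(\tau^{-1}X,K)=\mbox{Hom}_H(X,\tau K)=0$, and the postprojective observation that $\Sigma\subset\mathcal{T}(T)$ propagates to $\tau^{-1}\Sigma\subset\mathcal{T}(T)$ kills the remaining $\mbox{Ext}^1_H(T,\tau^{-1}K)$-summand. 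The main obstacle is the careful bookkeeping of the $F$-graded composition in the cluster category: one must correctly identify, for each pair of summands coming from the decomposition of $\mbox{Hom}_{\mathcal{C}}(\tilde T,-)$ on source and target, which $F$-weight the composition lands in. The crucial computational input is that $H$ being hereditary makes the $F^2$-component automatically vanish, which prevents unwanted extension contributions from spoiling the exactness in $\mbox{mod}\,B$.
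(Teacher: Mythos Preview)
Your approach is correct but takes a genuinely different route from the paper's. You argue by direct computation of the connecting morphisms in the long exact sequence: you decompose $\mbox{Hom}_{\mathcal{C}}(\tilde T,\tilde X)=\mbox{Hom}_H(T,X)\oplus\mbox{Hom}_{\mathcal D}(T,FX)$ and track the $F$-degree of each piece after composing with the connecting map $\partial\in\mbox{Hom}_{\mathcal D}(X,K[1])$. The degree-$0$ part lands in $\mbox{Ext}^1_H(T,K)=0$ because $K\in\mathcal T(T)$, and the degree-$1$ part lands in $\mbox{Ext}^2_H(T,\tau^{-1}K)=0$ because $H$ is hereditary; the argument for (ii) is analogous, with the key input $\tau^{-1}K\in\mathcal T(T)$ killing the relevant $\mbox{Ext}^1$. (Your invocation of $\mbox{Hom}_H(\tau^{-1}X,K)=0$ is not actually needed in this scheme, and the reference to $f$ being a right approximation is redundant for the vanishing of the Hom-contribution---what matters is only $K\in\mathcal T(T)$.)

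The paper instead proves the \emph{stronger} vanishing $\mbox{Hom}_B\big(\mbox{Hom}_{\mathcal C}(\tilde T,\tilde X),\,\mbox{Hom}_{\mathcal C}(\tilde T,\tau\tilde K)\big)=0$. Via the BMR equivalence this amounts to showing that \emph{every} morphism $\tilde X\to\tau\tilde K$ in $\mathcal C$ factors through $\mbox{add}(\tau\tilde T)$, not merely the specific connecting morphism. After Lemma~3.1 reduces this to $\mbox{Hom}_{\mathcal D}(X,K[1])$, the paper passes to the derived-equivalent hereditary algebra $H'=\mbox{End}_H(\tau^{-1}\Sigma)$, where $X$, $T[1]$ and $K[1]$ are honest modules, $T[1]$ is tilting, $X\in\mathcal F(T[1])$ and $K[1]\in\mathcal T(T[1])$; the factorisation then comes from the left $\mathcal T(T[1])$-approximation $X\to T'$ with $T'\in\mbox{add}(T[1])$. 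For (ii) the paper reduces to the same $\mbox{Hom}_{\mathcal D}(X,K[1])$ and reuses (i). Your argument is more elementary and avoids the change of hereditary algebra and the approximation machinery, while the paper's argument is more conceptual and yields a vanishing of an entire $\mbox{Hom}_B$-space rather than of a single map; for the proposition as stated, either suffices.
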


\begin{proof}

Let $Y \in {\mathcal{D}_{\Sigma}}$, recall we have a triangle $ \tilde{K} \rightarrow \tilde{E} \rightarrow \tilde{X} \rightarrow \tau\tilde{K}$ in $\mathcal{C}$

and recall the functor  $\mbox{Hom}_{\mathcal{C}}(\tilde{T},\, \, \,)$ induces an equivalence between $\mathcal{C}/\add $ and $\mbox{mod}B$.

Now, applying this functor to the last triangle, we obtain a long exact sequence in mod$B$

$$ ... \rightarrow \mbox{Hom}_{\mathcal{C}}(\tilde{T}, \widetilde{X[-1]}) \rightarrow \mbox{Hom}_{\mathcal{C}}(\tilde{T},\tilde{K})\rightarrow \mbox{Hom}_{\mathcal{C}}(\tilde{T},\tilde{E})\rightarrow $$

$$\rightarrow\mbox{Hom}_{\mathcal{C}}(\tilde{T},\tilde{X})\rightarrow \mbox{Hom}_{\mathcal{C}}(\tilde{T},\tau\tilde{K}) \rightarrow ...   \, \, \, (*)      $$
\vspace{0.3cm}

We want to construct an epimorphism from add$(\Sigma')$ to $Y \in \mathcal{D}_{\Sigma}$, we know that $\Sigma' = \Hom \tilde{\Sigma})$ and $E$ is an add$(\Sigma)$-approximation.
 Consider $E'= \mbox{Hom}_{\mathcal{C}}(\tilde{T},\tilde{E})$ and $f^{*} = \mbox{Hom}_{\mathcal{C}}(\tilde{T},\tilde{f})$. We are going to prove that $f^{*}:E'\rightarrow Y$ is an epimorphism. In order to do this, it suffices to see that

\noindent$\mbox{Hom}_{B}(\mbox{Hom}_{\mathcal{C}}(\tilde{T},\tilde{X}),\mbox{Hom}_{\mathcal{C}}(\tilde{T},\tau \tilde{K}))= 0$.

Since $\mbox{Hom}_{B}(\mbox{Hom}_{\mathcal{C}}(\tilde{T},\tilde{X}), \mbox{Hom}_{\mathcal{C}}(\tilde{T},\tau\tilde{K}))\simeq \mbox{Hom}_{\mathcal{C}}(\tilde{X},\tau\tilde{K})/ \mbox{add}(\tau\tilde{T})$, then consider $h:\tilde{X} \rightarrow \tau\tilde{K}$ in $\mathcal{C}$, then  by remark \ref{2.4}, if $h$ factorize by $\mbox{add}(\tau\tilde{T})$, we have that $\mbox{Hom}_{B}(\mbox{Hom}_{\mathcal{C}}(\tilde{T},\tilde{X}),\mbox{Hom}_{\mathcal{C}}(\tilde{T},\tau\tilde{K}))= 0$.

%Without lose of generality, assume $K$ is an indecomposable module. If not we can consider an indecomposable summand and the results holds for %additivity.

By lemma \ref{3.1}, since $K \in$ add$(\Sigma)$ we can assume that
$h \in \mbox{Hom}_{\mathcal{D}^{b}(H)}(X,K[1])$. Since we need to
prove that $h$ factorizes by add$(\tau \tilde{T})$ we are going to
consider $F\tau T = T[1]$, then is enough to show that $h$
factorizes by $T[1]$ in $\mathcal{D}^{b}(H)$.

We can assume without lose of generality that there exist $H'$ hereditary, derived equivalent to $H$ such that $X, T[1]$ and $K[1]$ are $H'$ modules, in fact, we can choose $H'=$End$_H(\tau^{-1}\Sigma)$.

Then $T[1]$ results a tilting module over $H'$ and we may consider the category $\mathcal{T}(T[1])$ which is a full subcategory of mod$H'$.

Under the hypothesis taken, we know that $K \in $ add$(\Sigma) \subset \mathcal{T}(\Sigma) \subset \mathcal{T}(T)$. Then we have $K[1] \in \mathcal{T}(T[1])$ and since $X$ is in mod$H$ we also have
$0=\mbox{Hom}_{\mathcal{D}^{b}(H)}(T[1],X) \simeq \mbox{Hom}_{\mathcal{D}^{b}(H')}(T[1],X)=\mbox{Hom}_{H'}(T[1],X)$ because $T[1]$ and $X$ are $H'$-modules. Thus we have shown that $X \in \mathcal{F}(T[1])$.

Therefore if $h: X \rightarrow K[1]$ we observe that, since $\mathcal{T}(T[1])$ is covariantly finite \cite{AR} and $\mathcal{T}(T[1])=$GEN$(T[1])$, there exist a left minimal $\mathcal{T}(T[1])$-approximation of $X$  $g: X \rightarrow T'$ with $T' \in \mbox{add}(T[1])$ such that Hom$_{H'}(g,K[1]): \mbox{Hom}_{H'}(T',K[1]) \rightarrow \mbox{Hom}_{H'}(X,K[1])$ is surjective. Hence there exist $k:T' \rightarrow K[1]$ such that $kg=h$.

 We have the following commutative diagram:

$$\begin{array}{ccc}
  X & \stackrel{h}\longrightarrow & K[1] \\
   & \searrow \, & \uparrow \\
   &  &   T'
\end{array}$$

 it is,  $h$ should factorizes by add$(T[1])$, thus $h:\tilde{X} \rightarrow \tau\tilde{K}$ factorizes by $\mbox{add}(\tau \tilde{T})$.

Therefore we have that the long exact sequence in $(*)$ factors through zero
$$ \begin{array}{cccccccc}
  &  \mbox{Hom}_{\mathcal{C}}(\tilde{T},\tilde{E}) & \stackrel{f^*}\longrightarrow & \mbox{Hom}_{\mathcal{C}}(\tilde{T},\tilde{X}) & \longrightarrow & \mbox{Hom}_{\mathcal{C}}(\tilde{T},\tau\tilde{K}) & \longrightarrow & ... \\
      &  &  &   & \searrow \,\,\, \nearrow &    &  &  \\
      &  &  &  & 0 &  &  &
\end{array}$$

 and so $f^{*}:\mbox{Hom}_{\mathcal{C}}(\tilde{T},\widetilde{E}) \rightarrow \mbox{Hom}_{\mathcal{C}}(\tilde{T},\widetilde{X})$ is an epimorphisms with $E'=\mbox{Hom}_{\mathcal{C}}(\tilde{T},\widetilde{E}) \in \mbox{add}(\Sigma')$.

This finishes the proof of (i).

Only remains to see that $\mbox{Hom}_{\mathcal{C}}(\tilde{T},\widetilde{X[-1]}) \rightarrow \mbox{Hom}_{\mathcal{C}}(\tilde{T},\tilde{K})$ is zero also. Then, as we did before, we want to see that

 $\mbox{Hom}_{B}(\mbox{Hom}_{\mathcal{C}}(\tilde{T},\widetilde{X[-1]}), \mbox{Hom}_{\mathcal{C}}(\tilde{T},\tilde{K})) = 0$, then we are going to prove that any $h \in \mbox{Hom}_{\mathcal{C}}(\widetilde{X[-1]},\tilde{K})$ factors through add$(\tau\tilde{T})$. Observe that $\widetilde{X[-1]}=\tau^{-1}\tilde{X}$ then we have that

\begin{center}
 $\mbox{Hom}_{\mathcal{C}}(\tau^{-1}\tilde{X},\tilde{K})= \mbox{Hom}_{\mathcal{D}^{b}(H)}(\tau^{-1}X, K) \oplus
\mbox{Hom}_{\mathcal{D}^{b}(H)}(\tau^{-1}X,\tau^{-1}K[1])$
 \end{center}

 where $\mbox{Hom}_{\mathcal{D}^{b}(H)}(\tau^{-1}X, K) \simeq \mbox{Hom}_{\mathcal{D}^{b}(H)}(X,\tau K)= \mbox{Hom}_{H}(X,\tau K) =0 $ because $X \in \mathcal{T}(\Sigma)$ and $\tau K \in \mathcal{F}(\Sigma)$.

 Therefore $\mbox{Hom}_{\mathcal{C}}(\tau^{-1}\tilde{X},\tilde{K}) = \mbox{Hom}_{\mathcal{D}^{b}(H)}(\tau^{-1}X,\tau^{-1}K[1]) \simeq \mbox{Hom}_{\mathcal{D}^{b}(H)}(X, K[1])$, then we know by (i) that any $h$ in $\mbox{Hom}_{\mathcal{C}}(\tau^{-1}\tilde{X},\tilde{K})$ will factor through add$(\tau\tilde{T})$ in $\mathcal{C}$ and we have

 $$0 \rightarrow \mbox{Hom}_{\mathcal{C}}(\tilde{T},\tilde{K})\rightarrow \mbox{Hom}_{\mathcal{C}}(\tilde{T},\tilde{E})\rightarrow \mbox{Hom}_{\mathcal{C}}(\tilde{T},\tilde{X})\rightarrow 0$$

\end{proof}

 \vspace{0.3cm}

Therefore, if we let $M$ be the $B$-module $M' \oplus \Sigma'$ our last proposition shows that we have $M$ generates all modules in mod$B$ because add$(\Sigma')$ is contained in add$(M)$. More over, our next lemma will prove that the epimorphism cons-tructed before is an add$(\Sigma')$-approximation of the modules in ${\mathcal{D}_{\Sigma}}$.

\begin{lem}\label{lem 1}
Let $E'=\mbox{Hom}_{\mathcal{C}}(\tilde{T},\widetilde{E})$.
Then, if $Y \in {\mathcal{D}_{\Sigma}}$  the epimorphism $f^{*}:E'
\rightarrow Y$, constructed before, is an
$\mbox{add}(\Sigma')$-approximation of $Y$.
\end{lem}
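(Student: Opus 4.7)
The plan is to translate the approximation question into a lifting problem in the cluster category $\mathcal{C}$, and then dispatch it using the rigidity of $\tilde{\Sigma}$. The previous proposition already gives us the short exact sequence
$$0 \to K' \to E' \stackrel{f^*}\rightarrow Y \to 0$$
in $\mbox{mod}B$, with $K' = \mbox{Hom}_{\mathcal{C}}(\tilde{T},\tilde{K})$ and $E' = \mbox{Hom}_{\mathcal{C}}(\tilde{T},\tilde{E})$ both in $\mbox{add}(\Sigma')$. To see that $f^*$ is an $\mbox{add}(\Sigma')$-approximation of $Y$, it is enough to check that, for every $\Sigma_0' = \mbox{Hom}_{\mathcal{C}}(\tilde{T},\tilde{S})$ with $S \in \mbox{add}(\Sigma)$, every morphism $g:\Sigma_0' \to Y$ factors through $f^*$.

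By the BMR-equivalence $\mathcal{C}/\mbox{add}(\tau\tilde{T}) \simeq \mbox{mod}B$, the morphism $g$ is represented by some $\tilde{g}:\tilde{S} \to \tilde{X}$ in $\mathcal{C}$, and lifting $g$ along $f^*$ in $\mbox{mod}B$ reduces to lifting $\tilde{g}$ along $\tilde{f}$ in $\mathcal{C}$. Applying $\mbox{Hom}_{\mathcal{C}}(\tilde{S},-)$ to the triangle $\tilde{K} \to \tilde{E} \stackrel{\tilde{f}}\rightarrow \tilde{X} \to \tau\tilde{K}$, the obstruction to such a lifting is the image of $\tilde{g}$ in $\mbox{Hom}_{\mathcal{C}}(\tilde{S},\tau\tilde{K})$. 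Using that $\tau$ coincides with the shift $[1]$ on $\mathcal{C}$, this space equals $\mbox{Ext}^1_{\mathcal{C}}(\tilde{S},\tilde{K})$. Since $\Sigma$ is a tilting $H$-module, its class $\tilde{\Sigma}$ is a tilting, hence rigid, object in $\mathcal{C}$ by the theorem of \cite{BMRRT} cited above, so $\mbox{Ext}^1_{\mathcal{C}}(\tilde{S},\tilde{K}) = 0$ for all $S,K \in \mbox{add}(\Sigma)$. Therefore a lift $\tilde{h}:\tilde{S} \to \tilde{E}$ with $\tilde{f}\tilde{h} = \tilde{g}$ exists in $\mathcal{C}$.

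Setting $h = \mbox{Hom}_{\mathcal{C}}(\tilde{T},\tilde{h})$ yields a morphism $\Sigma_0' \to E'$ with $f^* h = g$ in $\mbox{mod}B$, which is the required factorisation. The whole argument rests on one essential ingredient, the rigidity of $\tilde{\Sigma}$ in $\mathcal{C}$; the only piece of bookkeeping is the identification $\mbox{Hom}_{\mathcal{C}}(\tilde{S},\tau\tilde{K}) = \mbox{Ext}^1_{\mathcal{C}}(\tilde{S},\tilde{K})$ via the cluster-category shift. It is worth noting that, unlike in the proof of the proposition, one does not need here to pass to an auxiliary derived-equivalent hereditary algebra $H'$: because $K$ (and not just $\tau K$) is already in the fundamental domain and in $\mbox{add}(\Sigma)$, the rigidity of $\tilde{\Sigma}$ in $\mathcal{C}$ suffices.
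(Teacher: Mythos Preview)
Your argument is correct and is genuinely different from the paper's. The paper proves the approximation property by first reducing $\mbox{Hom}_{\mathcal{C}}(\tilde{U},\tilde{X})$ to $\mbox{Hom}_H(U,X)$: it computes the two summands in the fundamental domain, kills the $F$-part $\mbox{Hom}_{\mathcal{D}^b(H)}(U,FX)\simeq D\mbox{Hom}_H(\tau^{-1}X,\tau U)$ using the torsion pair $(\mathcal{T}(\Sigma),\mathcal{F}(\Sigma))$, and then lifts through the original $\mbox{add}(\Sigma)$-approximation $f:E\to X$ in $\mbox{mod}H$. You instead stay entirely in $\mathcal{C}$: applying $\mbox{Hom}_{\mathcal{C}}(\tilde{S},-)$ to the triangle $(*_1)$ identifies the lifting obstruction as $\mbox{Hom}_{\mathcal{C}}(\tilde{S},\tau\tilde{K})=\mbox{Ext}^1_{\mathcal{C}}(\tilde{S},\tilde{K})$, which vanishes because $\tilde{\Sigma}$ is a tilting (hence rigid) object in $\mathcal{C}$ by \cite{BMRRT}. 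This is shorter and more conceptual; the paper's computation, on the other hand, is in the same spirit as the subsequent Lemmas \ref{cg}--\ref{lem 3}, where one really does need to identify cluster-category Hom-spaces with Hom-spaces in $\mbox{mod}H$, so its method has the advantage of being reusable there.

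One small point of presentation: your opening sentence invokes the short exact sequence $0\to K'\to E'\to Y\to 0$ in $\mbox{mod}B$, but Proposition \ref{prop}(ii) only establishes this for $Y\in\mathcal{D}_{\tau^{-1}\Sigma}$, not for all $Y\in\mathcal{D}_{\Sigma}$. Fortunately your actual argument never uses that sequence; it works directly from the triangle $(*_1)$ in $\mathcal{C}$, which is available for every $X\in\mathcal{T}(\Sigma)$, together with the fact (recorded just before Proposition \ref{prop}) that $K=\mbox{Ker}f\in\mbox{add}(\Sigma)$. So the proof stands; you should simply drop or rephrase that first sentence.
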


\begin{proof}
Let $U'$ be an indecomposable module in  $\mbox{add}(\Sigma')$. We want to show that, $\mbox{Hom}_B(U',E') \rightarrow  \mbox{Hom}_B(U',Y) \rightarrow 0$ is exact, that is, if $0 \neq h \in \mbox{Hom}_{B}(U',Y)$ there exists $k \in \mbox{Hom}_{B}(U',Y)$ such that $f^{*}k=h$.

Using the [BMR]-equivalence we can assume there exists $X \in \mathcal{T}(T)$, $U \in \mbox{add}(\Sigma)$ such that $U'= \Hom \tilde{U})$ and $Y = \Hom \tilde{X})$. Hence we can write, $h \in \mbox{Hom}_{B}(U',Y) = \mbox{Hom}_{B}(\mbox{Hom}_{C}(\tilde{T},\tilde{U}), \mbox{Hom}_{C}(\tilde{T},\tilde{X})) \simeq \mbox{Hom}_{C}(\tilde{U},\tilde{X})/\mbox{add}(\tau\tilde{T})$, then there exist $h' \in \mbox{Hom}_{C}(\tilde{U},\tilde{X})$ such that $h=\mbox{Hom}_{C}(\tilde{T},h')$. Since $h \neq 0$ we can assume that $h'$ don't factorize by the $\mbox{add}(\tau\tilde{T})$ in $\mathcal{C}$.
Let's compute $\mbox{Hom}_{\mathcal{C}}(\tilde{U}, \tilde{X})$

$$\mbox{Hom}_{\mathcal{C}}(\tilde{U}, \tilde{X})= \mbox{Hom}_{\mathcal{D}^{b}(H)}(U, X) \oplus \mbox{Hom}_{\mathcal{D}^{b}(H)}(U, FX)$$
\vspace{0.3cm}

Since $U$ and $X$ are taken in the fundamental domain, even more $U$
is postprojective($U$ is not contained in any oriented cycle), we
know that at most one of the last terms is different from zero,
\cite{BMR2}.

Observe that $\mbox{Hom}_{\mathcal{D}^{b}(H)}(U, FX)= \mbox{Hom}_{\mathcal{D}^{b}(H)}(U, \tau^{-1}X[1])= \mbox{Ext}^{1}_H(U, \tau^{-1}X)= \mathcal{D}\mbox{Hom}_H(\tau^{-1}X,\tau U)$, and $\mbox{Hom}_H(\tau^{-1}X,\tau U)=0$ because $\tau^{-1}X \in \mathcal{T}(\Sigma)$ and $U \in \mathcal{F}(\Sigma)$.

Then we have only one case, $h' \in \mbox{Hom}_{\mathcal{D}^{b}(H)}(U, X)$.

Suppose $h' \in \mbox{Hom}_{\mathcal{D}^{b}(H)}(U, X)$ then $h' \in \mbox{Hom}_{H}(U, X)$ because $U$ and $X$ are $H$-modules,  and then, since $f:E \rightarrow X $ is an $\mbox{add}(\Sigma)$-approximation of $X \in \mbox{mod}(H)$, there exist $k': U \rightarrow E$ such that the following diagram commutes

$$\begin{array}{ccc}
   &  & U \\
   & \swarrow & \downarrow \\
  E & \rightarrow &  X
\end{array}$$

it is, $fk'=h'$. This commutative diagram induces a commutative diagram on $\mathcal{D}^{b}(H)$, which induces the following commutative diagram in $\mathcal{C}$

$$\begin{array}{ccc}
   &  & \tilde{U} \\
   & \swarrow & \downarrow \\
  \tilde{E} & \rightarrow &  \tilde{X}
\end{array}$$

Again, applying $\mbox{Hom}_{\mathcal{C}}(\tilde{T}, \, )$ we obtain

$$\begin{array}{ccc}
   &  & \mbox{Hom}_{\mathcal{C}}(\tilde{T},\tilde{U}) \\
   & \swarrow & \downarrow \\
  \mbox{Hom}_{\mathcal{C}}(\tilde{T},\tilde{E}) & \rightarrow &  \mbox{Hom}_{\mathcal{C}}(\tilde{T},\tilde{X})
\end{array}$$

Taking $k = \mbox{Hom}_{\mathcal{C}}(\tilde{T},k')$ we have that $f^{*}k = \mbox{Hom}_{\mathcal{C}}(\tilde{T},\tilde{f}k') = \mbox{Hom}_{\mathcal{C}}(\tilde{T},h')=h$.
Then, $h: U' \rightarrow Y$ factors through $E'$; observe that $k \neq 0$, otherwise $h'$ would factorize by $\mbox{add}(\tau\tilde{T})$.
\vspace{0.3cm}
\end{proof}

We are going to see now that this approximation is in fact, an
add$(M)$-approximation of the modules in $\mathcal{D}_\Sigma$; To
achieve this objective we are going to prove that the morphism
$\mbox{Hom}_B(M',E')
\stackrel{\mbox{Hom}_B(M',f^{*})}\longrightarrow
\mbox{Hom}_B(M',E')$ is an epimorphism. Recall $M' = \mathcal{G}
\bigoplus Q'$. Before we establish our next result, we are going to
state the next technical lemma.

%el (equal to) de la proxima demostracion en un lema aparte, por que no es tan claro y molesta en la proxima demostración%

\begin{lem}\label{cg}
For any indecomposable $G$ in $\mathcal{F}(\Sigma)$ and any indecomposable $X$ in $\mathcal{T}(\Sigma)$
$$ \mbox{Hom}_{\mathcal{C}}(\tilde{G},\tilde{X})= \mbox{Hom}_{H}(G,X).$$
\end{lem}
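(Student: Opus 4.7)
The plan is to mirror the strategy used in the preceding lemma: decompose $\mbox{Hom}_{\mathcal{C}}(\tilde{G},\tilde{X})$ as a finite sum of Hom-groups in $\mathcal{D}^b(H)$ via the fact that $\tilde{G}$ and $\tilde{X}$ lie in the fundamental domain of $\mathcal{C}$, and then use the torsion pair $(\mathcal{T}(\Sigma),\mathcal{F}(\Sigma))$ in $\mbox{mod}\,H$, together with the convexity of the slice $\Sigma$, to kill the extra summand that appears beyond $\mbox{Hom}_H(G,X)$.

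Concretely, I would first observe that since $G$ and $X$ are $H$-modules both classes $\tilde{G}$ and $\tilde{X}$ sit in the fundamental domain of $\mathcal{C}$, so only the indices $i=0$ and $i=1$ contribute to $\bigoplus_{i\in\mathbb{Z}}\mbox{Hom}_{\mathcal{D}^b(H)}(G,F^{i}X)$, giving
$$\mbox{Hom}_{\mathcal{C}}(\tilde{G},\tilde{X}) \;=\; \mbox{Hom}_H(G,X)\;\oplus\;\mbox{Hom}_{\mathcal{D}^b(H)}(G,FX).$$
The $i=0$ summand is already the desired group, so the whole content of the lemma is the vanishing of $\mbox{Hom}_{\mathcal{D}^b(H)}(G,FX)$.

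To establish that vanishing, I would split according to whether $X$ is injective. In the generic case where $X$ is not injective, $FX=\tau^{-1}X[1]$ and $\mbox{Hom}_{\mathcal{D}^b(H)}(G,FX)=\mbox{Ext}^{1}_H(G,\tau^{-1}X)$. If additionally $G$ is not projective, Auslander--Reiten duality gives $\mbox{Ext}^{1}_H(G,\tau^{-1}X)\cong D\overline{\mbox{Hom}}_H(\tau^{-1}X,\tau G)$, and convexity of $\Sigma$ closes the argument: $\mathcal{T}(\Sigma)$ is closed under successors so $\tau^{-1}X\in\mathcal{T}(\Sigma)$, while $\mathcal{F}(\Sigma)$ is closed under predecessors so $\tau G\in\mathcal{F}(\Sigma)$, whence the torsion pair forces $\mbox{Hom}_H(\tau^{-1}X,\tau G)=0$. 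If $G$ is projective, $\mbox{Ext}^{1}_H(G,-)=0$ directly; if $X$ is injective, $FX=\nu^{-1}X[2]$ is a shifted projective and the Hom becomes an $\mbox{Ext}^{2}$, which vanishes since $H$ is hereditary.

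The main subtlety I anticipate is precisely this boundary casework: when $X$ is injective or $G$ is projective, the naive formulas for $\tau^{\pm 1}$ fail inside $\mbox{mod}\,H$ and the torsion pair argument does not apply as stated, so those cases must be handled separately (either by the ad hoc computations just sketched, or uniformly via Serre duality in $\mathcal{D}^b(H)$). Once this bookkeeping is in place, the core torsion-pair vanishing step is immediate from the properties of convex tilting modules collected in the preliminaries.
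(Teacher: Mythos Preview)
Your proposal is correct and follows essentially the same approach as the paper: decompose $\mbox{Hom}_{\mathcal{C}}(\tilde G,\tilde X)$ using the fundamental domain, then kill the $F$-summand via the torsion pair $(\mathcal{T}(\Sigma),\mathcal{F}(\Sigma))$ and Auslander--Reiten duality, with separate handling of the boundary cases $X$ injective and $G$ projective. Your treatment of the $G$ projective case (simply invoking $\mbox{Ext}^1_H(G,-)=0$) is in fact cleaner than the paper's, which somewhat redundantly applies AR duality again to reach $D\mbox{Hom}_H(\tau^{-2}X,G)=0$.
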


\begin{proof}

We know that $G$ and $X$ are indecomposable $H$-modules. We can assume that $\tilde{G}$ and $\tilde{X}$ are in the fundamental domain of $\mathcal{C}$.
Therefore we can compute $\mbox{Hom}_{\mathcal{C}}(\tilde{G},\tilde{X})= \mbox{Hom}_{\mathcal{D}^{b}(H)}(G,X)\oplus \mbox{Hom}_{\mathcal{D}^{b}(H)}(G,F(X))$, because since $X$ and $G$ are in the fundamental domain,  we know by \cite{BMRRT}, that only this summands can be different from zero.

We want to see that $\mbox{Hom}_{\mathcal{D}^{b}(H)}(G,F(X)) = 0$.

If $X$ is injective then $\tau^{-1}X = P[1]$ with $P$ a projective indecomposable module in $\mbox{mod}H$, so $F(X)= \tau^{-1}X[1]= P[2]$ and $ \mbox{Hom}_{\mathcal{D}^{b}(H)}(G,F(X))= \mbox{Hom}_{\mathcal{D}^{b}(H)}(G,P[2]) = \mbox{Ext}^{2}_{H}(G,P)= 0$, because $H$ is hereditary.

If $X$ is not injective, then we have that $\tau^{-1}X \in \mbox{mod}H$ and then $$ \mbox{Hom}_{\mathcal{D}^{b}(H)}(G,\tau^{-1}X[1])= \mbox{Ext}^{1}_{H}(G,\tau^{-1}X) \simeq D\mbox{Hom}_{H}(\tau^{-1}X,\tau G)$$ where the last isomorphism is given by the Auslander-Reiten formula, but we have, if $G$ is not projective, $\tau G \in \mathcal{F}(\Sigma)$ and $\tau^{-1}X \in \mathcal{T}(\Sigma)$ because $\mathcal{T}(\Sigma)$ is closed under successors and $\mathcal{F}(\Sigma)$ is closed under predecessors, so $\mbox{Hom}_{H}(\tau^{-1}X,\tau G) = 0 $ and therefore $\mbox{Hom}_{\mathcal{D}^{b}(H)}(G,\tau^{-1}X[1]) = 0$.

if $G$ is projective we have $\mbox{Ext}^{1}_{H}(G,\tau^{-1}X) \simeq D\mbox{Hom}_{H}(\tau^{-2}X,G)=0$, assuming $\tau^{-2}X$ not injective, otherwise $\tau^{-2}X= P[1]$ and $\mbox{Ext}^{2}_{H}(G,P)=0$.

Then we have proof that $\mbox{Hom}_{\mathcal{D}^{b}(H)}(G,F(X)) = 0$, and so
$\mbox{Hom}_{\mathcal{C}}(\tilde{G},\tilde{X}) = \mbox{Hom}_{\mathcal{D}^{b}(H)}(G,X) = \mbox{Hom}_{H}(G,X)$ because $G$ and $X$ are $H$-m\'odules.
\end{proof}

We are now in conditions, of proving that $f^{*}:E' \rightarrow Y$
is an add$(\Sigma \oplus \mathcal{G})$-approximation of $Y$.

\begin{lem}\label{lem 2}
Let $f^{*}$ be the morphism constructed before. If $C_G$ is in add$(\mathcal{G})$, then $\mbox{Hom}_B(C_G, f^{*}):\mbox{Hom}_B(C_G, E') \rightarrow \mbox{Hom}_B(C_G, Y)$ is an epimorphism.
\end{lem}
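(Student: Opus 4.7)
The plan is to reduce the assertion, via the BMR-equivalence and Lemma \ref{cg}, to a lifting problem inside $\mbox{mod}H$, and then to solve it using the torsion pair $(\mathcal{T}(\Sigma),\mathcal{F}(\Sigma))$. Since $\mbox{Hom}_B$ commutes with finite direct sums, I may assume without loss of generality that $C_G = \mbox{Hom}_{\mathcal{C}}(\tilde{T},\tilde{G})$ for a single indecomposable $G \in \mathcal{F}(\Sigma)$. Given $h \in \mbox{Hom}_B(C_G,Y)$, exactly as in the proof of Lemma \ref{lem 1} the BMR-equivalence gives
$$\mbox{Hom}_B(C_G,Y)\simeq \mbox{Hom}_{\mathcal{C}}(\tilde{G},\tilde{X})/\mbox{add}(\tau\tilde{T}),$$
and Lemma \ref{cg} identifies $\mbox{Hom}_{\mathcal{C}}(\tilde{G},\tilde{X})$ with $\mbox{Hom}_H(G,X)$. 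Choose a preimage $h' \in \mbox{Hom}_H(G,X)$ of $h$.

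The key step is to lift $h'$ along the short exact sequence $0 \to K \to E \stackrel{f}{\to} X \to 0$ constructed before Lemma \ref{3.1}. Applying $\mbox{Hom}_H(G,-)$ gives
$$\mbox{Hom}_H(G,E) \to \mbox{Hom}_H(G,X) \to \mbox{Ext}^1_H(G,K),$$
so it suffices to prove $\mbox{Ext}^1_H(G,K)=0$. If $G$ is projective this is immediate. Otherwise, the Auslander-Reiten formula yields $\mbox{Ext}^1_H(G,K) \simeq D\overline{\mbox{Hom}}_H(K,\tau G)$; since $\tau G$ is a predecessor of $G$ and $\mathcal{F}(\Sigma)$ is closed under predecessors, $\tau G \in \mathcal{F}(\Sigma)$. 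On the other hand, $K \in \mbox{add}(\Sigma) \subset \mathcal{T}(\Sigma)$, so $\mbox{Hom}_H(K,\tau G)=0$ because $(\mathcal{T}(\Sigma),\mathcal{F}(\Sigma))$ is a torsion pair. Hence $\mbox{Ext}^1_H(G,K)=0$ and there exists $k' \in \mbox{Hom}_H(G,E)$ with $fk'=h'$.

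Finally, the commutative triangle $fk'=h'$ in $\mbox{mod}H$ lifts to $\mathcal{D}^b(H)$, descends to $\mathcal{C}$, and after applying $\mbox{Hom}_{\mathcal{C}}(\tilde{T},-)$ produces a morphism $k := \mbox{Hom}_{\mathcal{C}}(\tilde{T},\tilde{k'}): C_G \to E'$ satisfying $f^{*}k=h$ in $\mbox{mod}B$. This proves the surjectivity of $\mbox{Hom}_B(C_G,f^{*})$. The main obstacle is the vanishing of $\mbox{Ext}^1_H(G,K)$; this is the only non-formal ingredient, and it is handled cleanly by the torsion-pair argument above, together with the fact that the chosen $K$ is \emph{inside} $\mbox{add}(\Sigma)$ rather than merely in $\mathcal{T}(\Sigma)$.
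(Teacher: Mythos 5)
Your proof is correct. The reduction via the BMR-equivalence and Lemma~\ref{cg} to a lifting problem in $\mbox{mod}H$ is identical to the paper's, as is the final descent of the commutative triangle from $\mbox{mod}H$ through $\mathcal{D}^b(H)$ and $\mathcal{C}$ to $\mbox{mod}B$. Where you diverge is in \emph{how} you lift $h':G\to X$ along $f:E\to X$. The paper asserts, rather tersely, that any morphism $G\to X$ factors through $E$ ``because $\mathcal{T}(\Sigma)$ is covariantly finite'', which on its face only produces a factorization of $h'$ through a left $\mathcal{T}(\Sigma)$-approximation of $G$, not through the specific map $f$. You instead apply $\mbox{Hom}_H(G,-)$ to the short exact sequence $0\to K\to E\to X\to 0$, reduce the problem to $\mbox{Ext}^1_H(G,K)=0$, and prove that vanishing directly: if $G$ is projective it is trivial, and otherwise the Auslander--Reiten formula plus the torsion pair $(\mathcal{T}(\Sigma),\mathcal{F}(\Sigma))$ give $\mbox{Ext}^1_H(G,K)\simeq D\overline{\mbox{Hom}}_H(K,\tau G)=0$ since $K\in\mathcal{T}(\Sigma)$ and $\tau G\in\mathcal{F}(\Sigma)$. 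This makes essential use of the fact that $K$ lies in $\mbox{add}(\Sigma)$ (hence in $\mathcal{T}(\Sigma)$), which the paper established earlier but does not explicitly invoke at this point. Your version is the more explicit and self-contained of the two and fills in the logical step the paper leaves implicit; the conclusion and the rest of the argument coincide.
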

\begin{proof}

Let $C_{G}=\mbox{Hom}_{C}(\tilde{T},\tilde{G})$ with $G \in \mathcal{F}(\Sigma)$ indecomposable.
Suppose that there is a map $0 \neq h : C_{G}\rightarrow Y$, we want to show that the following diagram:

$$\begin{array}{ccc}
   &  & C_G \\
   & \swarrow & \downarrow \\
  E' & \rightarrow &  Y
\end{array}$$

commutes, it is, there exists a $ k: C_{G}\rightarrow E'$ in $\mbox{mod}(B)$ such that $fk=h$.
We can write, as we did before, using the [BMR]-equivalence $h \in \mbox{Hom}_{B}(C_G,Y) \simeq \mbox{Hom}_{C}(\tilde{G},\tilde{X})/\mbox{add}(\tau\tilde{T})$, then there exist $h' \in \mbox{Hom}_{C}(\tilde{G},\tilde{X})$ such that $h=\mbox{Hom}_{C}(\tilde{T},h')$. We can assume that $h'$ don't factorize by the $\mbox{add}(\tau\tilde{T})$ in $\mathcal{C}$, otherwise $h$ will be zero in mod$B$.

So, we have that, $\mbox{Hom}_{B}(\mbox{Hom}_{C}(\tilde{T},\tilde{G}),\mbox{Hom}_{C}(\tilde{T},\tilde{X}))\simeq \mbox{Hom}_{C}(\tilde{G},\tilde{X})/\mbox{add}(\tau\tilde{T})$. By lemma \ref{cg} since $G \in \mathcal{F}(\Sigma)$ and $X \in \mathcal{T}(X)$ $ \mbox{Hom}_{C}(\tilde{G},\tilde{X})= \mbox{Hom}_{H}(G,X)$ and $h':G \rightarrow X$.

Hence, we have that $h=\mbox{Hom}_{\mathcal{C}}(\tilde{T},h')$ with $h' \in \mbox{Hom}_{H}(G,X)$. Then, since $f:E \rightarrow X $ is an $\mbox{add}(\Sigma)$-approximation of $X \in \mbox{mod}H$ and $\Sigma$ is a complete slice in mod$H$, any morphism from $G$ to $X$, will factorize by $E$ because $\mathcal{T}(\Sigma)$ is covariantly finite. This is, there exist $k': G \rightarrow E$ such that the following diagram commutes

$$\begin{array}{ccc}
   &  & G \\
   & \swarrow & \downarrow \\
  E & \rightarrow &  X
\end{array}$$

We may now proceed as in lemma \ref{lem 1}, and show that this diagram induces a commutative diagram on mod$B$

$$\begin{array}{ccc}
   &  & \mbox{Hom}_{\mathcal{C}}(\tilde{T},\tilde{G}) \\
   & \swarrow & \downarrow \\
  \mbox{Hom}_{\mathcal{C}}(\tilde{T},\tilde{E}) & \rightarrow &  \mbox{Hom}_{\mathcal{C}}(\tilde{T},\tilde{X})
\end{array}$$

 We have  proved that there exist $k= \mbox{Hom}_{\mathcal{C}}(\tilde{T},\tilde{k'})$ such that $f^{*}k = \mbox{Hom}_{\mathcal{C}}(\tilde{T},\tilde{f}\tilde{k'}) = \mbox{Hom}_{\mathcal{C}}(\tilde{T},\tilde{h'})=h$.
 Then, $h: C_{G} \rightarrow Y$ factors through $E'$; observe that $k \neq 0$ because $k'$ don't factorize by $\mbox{add}(\tau\tilde{T})$, otherwise $h'$ would factorize by $\mbox{add}(\tau\tilde{T})$.
 This finishes proof of the lemma.
\end{proof}

It only rest to prove that, if we have a map from a module $Q$ in
add$(Q')$, in any module $Y \in \mathcal{D}_\Sigma$, this map also factors
through $E'$. Our next lemma proves the desired result.

\begin{lem}\label{lem 3}
Let $h$ be a map in $\mbox{Hom}_{B}(Q,Y)$, then there exist a
morphism $k$ such that $f^{*}k=h$ and $h$ factors through $E'$.
\end{lem}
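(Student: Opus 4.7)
The plan is to mirror the approach of Lemmas \ref{lem 1} and \ref{lem 2}: use the BMR-equivalence to reformulate the factorization in the cluster category $\mathcal{C}$, lift it to a factorization in $\mbox{mod}H$, and translate back to $\mbox{mod}B$. First I would reduce to the indecomposable case $Q = \mbox{Hom}_{\mathcal{C}}(\tilde{T}, \widetilde{P[1]})$ for some indecomposable projective $H$-module $P$, with $Y = \mbox{Hom}_{\mathcal{C}}(\tilde{T}, \tilde{X})$ and $X \in \mathcal{T}(\Sigma)$ indecomposable. Via the isomorphism $\mbox{Hom}_B(Q, Y) \simeq \mbox{Hom}_{\mathcal{C}}(\widetilde{P[1]}, \tilde{X})/\mbox{add}(\tau \tilde{T})$, I represent $h$ by some $h' \in \mbox{Hom}_{\mathcal{C}}(\widetilde{P[1]}, \tilde{X})$ not factoring through $\mbox{add}(\tau \tilde{T})$ (otherwise $h = 0$ and the claim is immediate with $k = 0$).

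Next I would compute $\mbox{Hom}_{\mathcal{C}}(\widetilde{P[1]}, \tilde{X})$ via the usual fundamental-domain decomposition. The degree-$0$ piece $\mbox{Hom}_{\mathcal{D}^{b}(H)}(P[1], X) = \mbox{Ext}^{-1}_H(P, X)$ vanishes, so only the $F$-shift piece $\mbox{Hom}_{\mathcal{D}^{b}(H)}(P[1], FX)$ can contribute. If $X$ is injective then $FX$ sits in cohomological degree two as $P'[2]$ for an appropriate projective $P'$, and $\mbox{Hom}_{\mathcal{D}^{b}(H)}(P[1], P'[2]) = \mbox{Ext}^{1}_H(P, P') = 0$ by projectivity of $P$; so $h' = 0$ and the statement is trivial. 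If $X$ is not injective, then $FX = \tau^{-1}X[1]$ and $h'$ is represented by some $h'' \in \mbox{Hom}_H(P, \tau^{-1}X)$.

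The key step is to lift $h''$ along $\tau^{-1}f : \tau^{-1}E \to \tau^{-1}X$. Applying $\tau^{-1}_{\mathcal{D}}$ to the triangle $K \to E \to X \to K[1]$ in $\mathcal{D}^{b}(H)$ produces a triangle $\tau^{-1}K \to \tau^{-1}E \to \tau^{-1}X \to \tau^{-1}K[1]$; since $K, E \in \mbox{add}(\Sigma)$ are postprojective, in particular non-injective, and $X$ is also non-injective by assumption, all three objects are honest $H$-modules. Hence, since $H$ is hereditary, this triangle collapses to a short exact sequence $0 \to \tau^{-1}K \to \tau^{-1}E \to \tau^{-1}X \to 0$ in $\mbox{mod}H$. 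Applying $\mbox{Hom}_H(P, -)$, the projectivity of $P$ forces $\mbox{Ext}^{1}_H(P, \tau^{-1}K) = 0$, so $h''$ lifts to some $k'' : P \to \tau^{-1}E$ with $(\tau^{-1}f) k'' = h''$.

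Finally, $k''$ translates back into a morphism $k' : \widetilde{P[1]} \to \tilde{E}$ in $\mathcal{C}$ satisfying $\tilde{f} k' = h'$, and $k = \mbox{Hom}_{\mathcal{C}}(\tilde{T}, k')$ provides the desired element of $\mbox{Hom}_B(Q, E')$ with $f^{*} k = h$; the non-vanishing $k \neq 0$ will follow exactly as in Lemmas \ref{lem 1} and \ref{lem 2}, from the assumption that $h'$ does not factor through $\mbox{add}(\tau \tilde{T})$, since otherwise $h'' = (\tau^{-1}f) k''$ would pull back such a factorization. The main obstacle I anticipate is the careful verification that $\tau^{-1}_{\mathcal{D}}$ sends the short exact sequence $0 \to K \to E \to X \to 0$ to a genuine short exact sequence in $\mbox{mod}H$, which reduces to tracking that none of the three terms acquires a shift component, together with the bookkeeping of the case distinction on whether $X$ is injective.
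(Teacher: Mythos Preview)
Your proposal is correct and follows essentially the same route as the paper's proof: reduce to an indecomposable projective $P$, compute $\mbox{Hom}_{\mathcal{C}}(\widetilde{P[1]},\tilde X)$ via the fundamental domain, dispose of the degree-zero piece and of the case $X$ injective, and in the remaining case lift $h'':P\to\tau^{-1}X$ along $\tau^{-1}f$ using projectivity of $P$, then shift and pass back through the BMR-equivalence. The only difference is cosmetic: you justify the lifting by first checking that $\tau^{-1}$ of the triangle $K\to E\to X\to K[1]$ is again a short exact sequence in $\mbox{mod}H$ and then invoking $\mbox{Ext}^1_H(P,\tau^{-1}K)=0$, whereas the paper simply asserts that $\tau^{-1}f:\tau^{-1}E\to\tau^{-1}X$ is an epimorphism and lifts directly by projectivity of $P$; your version is in fact the cleaner justification of that surjectivity.
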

\begin{proof}

Assume we have $0 \neq h \in \mbox{Hom}_{B}(Q,Y)$ where
$Q=\mbox{Hom}_{\mathcal{C}}(\tilde{T}, \widetilde{P[1]})$ with $P$
an indecomposable projective module in $H$.

Let $h$ be, $h=
\mbox{Hom}_{\mathcal{C}}(\tilde{T}, h')$ with $h' \in
\mbox{Hom}_{\mathcal{C}}(\widetilde{P[1]},\tilde{X})$. Then, we have

$$ \mbox{Hom}_{\mathcal{C}}(\widetilde{P[1]},\tilde{X})= \oplus_{i \in Z}\mbox{Hom}_{\mathcal{D}^{b}(H)}(P[1],F^{i}X)=$$
$$\mbox{Hom}_{\mathcal{D}^{b}(H)}(P[1],X) \oplus \mbox{Hom}_{\mathcal{D}^{b}(H)}(P[1],\tau^{-1}X[1])$$

where,
$\mbox{Hom}_{\mathcal{D}^{b}(H)}(P[1],X)=0$ because $X \in$ mod$H$
and $P[1] \in$ mod$H[1]$ therefore there is no non zero map between
$X$ and $P[1]$ in $D^{b}(H)$.

Then the only possibility for $h' \neq 0$ is $h' \in \mbox{Hom}_{\mathcal{D}^{b}(H)}(P[1],\tau^{-1}X[1])$ and so $h'[-1] \in \mbox{Hom}_{\mathcal{D}^{b}(H)}(P,\tau^{-1}X)$.

First observe that if $X$ is injective then $\tau^{-1}X =
P'[1]$ with $P'$ projective indecomposable in mod$H$, therefore
$\mbox{Hom}_{\mathcal{D}^{b}(H)}(P,\tau^{-1}X)=
\mbox{Hom}_{\mathcal{D}^{b}(H)}(P,P'[1])=
\mbox{Ext}^{1}_{H}(P,P')=0$.

Let us assume then, that $X$ is not injective, hence $\tau^{-1}X \in$ mod$H$. Moreover, we have $h'[-1] \in \mbox{Hom}_{H}(P,\tau^{-1}X)$.

Since $X \in \mathcal{T}(\Sigma)$ and $\mathcal{T}(\Sigma)$ is closed under successors we have, $\tau^{-1}X \in \mathcal{T}(\Sigma)$. Moreover, we can consider $\tau^{-1} \Sigma$ as an complete slice, and so $\tau^{-1}X \in \mathcal{T}(\tau^{-1}\Sigma)$.

Recall $f:E \rightarrow X$ was constructed as an $\mbox{add}(\Sigma)$- approximation of $X$, for $X \in \mathcal{T}(\Sigma)$. Now we can take
$$\tau^{-1}f:\tau^{-1}E \rightarrow \tau^{-1}X \rightarrow 0.$$ We
now can construct the following commutative diagram on mod$H$

$$\begin{array}{ccccc}
   &  & P \\
    &  &  \downarrow  \\
\tau^{-1}E & \stackrel{\tau^{-1}f}\longrightarrow & \tau^{-1}X &
\rightarrow & 0
\end{array}$$

Since $P$ is projective, then the morphism $h'[-1]$ factors through
$\tau^{-1}E$, it is, there exists $k':P \rightarrow \tau^{-1}E$ such
that the following diagram

$$\begin{array}{ccc}
 P  & \longrightarrow & \tau^{-1}X \\
  \, \searrow &  & \nearrow \\
 & \tau^{-1}E &
\end{array}$$
commutes.

Applying the shift functor $[\,\,\,]$ to this diagram, we obtain a new
commutative diagram in $\mathcal{D}^{b}(H)$.

$$\begin{array}{ccccc}
 P[1]  & \longrightarrow  &\tau^{-1}X[1]=FX \\
\searrow & & \nearrow\\
 & \tau^{-1}E[1] = FE &
\end{array}$$

This diagram, induces the following diagram on $\mathcal{C}$

$$\begin{array}{ccc}
 \widetilde{P[1]}  & \longrightarrow & \tilde{X} \\
  \searrow &  & \nearrow \ \\
 & \tilde{E} &
\end{array}$$

And, as we did before we apply the functor $\mbox{Hom}_{\mathcal{C}}(\tilde{T}, \,\,\, )$ to obtain a commutative diagram on mod$B$

$$\begin{array}{ccc}
   &  & \mbox{Hom}_{\mathcal{C}}(\tilde{T}, \widetilde{P[1]})\\
   & \swarrow & \downarrow \\
  \mbox{Hom}_{\mathcal{C}}(\tilde{T},\tilde{E}) & \rightarrow &  \mbox{Hom}_{\mathcal{C}}(\tilde{T},\tilde{X})
\end{array}$$

Then, if we choose $k \simeq
\mbox{Hom}_{\mathcal{C}}(\tilde{T},\widetilde{k'[1]})$, we get that
$f^{*}k=h$, where the commutativity is given by the above diagram.

\end{proof}

\begin{prop} Let $M$ be the $B$-module, $M = \Sigma' \oplus Q' \oplus \mathcal{G} $ where $\mathcal{G}$ , $Q'$ and $\Sigma$ are considered as before. Then the map $Hom_B(N,f^{*}):\mbox{Hom}_{B}(N,E')\rightarrow \mbox{Hom}_{B}(N,Y)$ is an epimorphism for every module $Y \in D_{\Sigma}$ and $N \in \mbox{add}(M)$  and $E'$ the $\mbox{add}(\Sigma')$-approximation constructed before.
\end{prop}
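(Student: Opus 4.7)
My plan is to reduce the statement to the three lemmas already proved (\ref{lem 1}, \ref{lem 2}, and \ref{lem 3}) by a straightforward additivity and case-splitting argument. The key observation is that the functor $\mathrm{Hom}_B(-,E')$ and $\mathrm{Hom}_B(-,Y)$ both send direct sums in the first argument to direct products, and the property of $\mathrm{Hom}_B(N,f^{*})$ being surjective is preserved under direct summands and under finite direct sums in $N$. Therefore it suffices to check the claim for every indecomposable summand of $M$.

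Concretely, the plan is as follows. First I would observe that $N \in \mathrm{add}(M) = \mathrm{add}(\Sigma' \oplus Q' \oplus \mathcal{G})$ decomposes as $N \simeq N_1 \oplus N_2 \oplus N_3$ with $N_1 \in \mathrm{add}(\Sigma')$, $N_2 \in \mathrm{add}(Q')$ and $N_3 \in \mathrm{add}(\mathcal{G})$, and the statement for $N$ follows from the statement applied separately to $N_1$, $N_2$ and $N_3$ (since $\mathrm{Hom}_B(N_1 \oplus N_2 \oplus N_3, f^{*})$ decomposes as the direct sum of the three $\mathrm{Hom}_B(N_i,f^{*})$, and a direct sum of surjections is a surjection). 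Similarly, I can reduce each case further to an indecomposable summand.

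Then I would invoke the three previous lemmas case by case: if $N_1$ is an indecomposable summand of $\Sigma'$, then $\mathrm{Hom}_B(N_1,f^{*})$ is surjective by Lemma \ref{lem 1}, which is exactly the statement that $f^{*}:E' \to Y$ is an $\mathrm{add}(\Sigma')$-approximation; if $N_3$ is an indecomposable summand of $\mathcal{G}$, then we are in the setup of Lemma \ref{lem 2}, so any map $N_3 \to Y$ factors through $f^{*}$; finally, if $N_2$ is an indecomposable summand of $Q'$ (hence of the form $\mathrm{Hom}_{\mathcal{C}}(\tilde{T},\widetilde{P[1]})$ for some indecomposable projective $P \in \mathrm{mod}\,H$), then any map $N_2 \to Y$ factors through $f^{*}$ by Lemma \ref{lem 3}. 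Combining these three surjectivities gives the desired surjectivity of $\mathrm{Hom}_B(N,f^{*})$.

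There is no real obstacle here; the three prior lemmas were precisely designed to handle the three types of indecomposable summands of the candidate Auslander generator $M$, and the present proposition is the formal compilation of these results into a single statement asserting that $f^{*}$ is an $\mathrm{add}(M)$-approximation of every module in $\mathcal{D}_{\Sigma}$. The only thing worth emphasizing in the write-up is the additivity argument allowing the reduction to indecomposables, which is standard.
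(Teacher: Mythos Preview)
Your proposal is correct and follows essentially the same approach as the paper: reduce to indecomposable summands of $M$, split into the three cases $\mathrm{add}(\Sigma')$, $\mathrm{add}(Q')$, $\mathrm{add}(\mathcal{G})$, invoke Lemmas~\ref{lem 1}, \ref{lem 3}, and \ref{lem 2} respectively, and conclude by additivity of $\mathrm{Hom}_B(-,f^{*})$. Your write-up is in fact slightly more explicit about the additivity reduction than the paper's, and your matching of lemmas to cases (Lemma~\ref{lem 2} for $\mathcal{G}$, Lemma~\ref{lem 3} for $Q'$) is the correct one.
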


\begin{proof}
Let $N$ be an indecomposable direct summand of $M$.
Then we can consider the following cases:

\begin{itemize}
\item[(a)]
$ N \in \mbox{add}(\Sigma')$
\item[(b)]
$ N \in \mbox{add}(Q')$
\item[(c)]
$ N \in \mbox{add}(\mathcal{G})$
\end{itemize}

Hence we know the results holds for $N$ in all the above cases. Case $(a)$ follows from Lemma \ref{lem 1}, case $(b)$ from Lemma \ref{lem 2} and case $(c)$ from Lemma \ref{lem 3} Then the proposition follows for $M$ by the additivity of the functor
$\mbox{Hom}_{B}(N, \, \,)$.

\end{proof}

This proves that there exist an add$(M)$-resolution of length at
most two, for every module in mod$B$. Note that this proposition
proves that $M$ satisfies the condition (a) of definition \ref{df.
repdim} for $i=2$ and therefore that gl.dim End$_B(M) \leq 3$. We
are going to proceed now to state and prove our main result.

\begin{thm}
Let $B$ a cluster-concealed algebra of infinite representation type. Then $\mbox{rep.dim}B = 3$.
\end{thm}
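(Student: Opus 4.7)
The plan is to establish both bounds on $\mbox{rep.dim}B$ separately. For the lower bound, I will invoke Auslander's classical characterization recalled in the introduction: an algebra $A$ satisfies $\mbox{rep.dim}A \leq 2$ if and only if $A$ is representation-finite. Since $B$ is assumed of infinite representation type, this immediately yields $\mbox{rep.dim}B \geq 3$.

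For the upper bound, I use the $B$-module $M = \Sigma' \oplus Q' \oplus \mathcal{G}$ constructed in the preceding development, augmented so that it contains an injective cogenerator as a direct summand; call the resulting module $\widehat{M} = M \oplus DB$. The previous proposition already establishes that every $B$-module $Y$ admits an $\mbox{add}(M)$-approximation of length at most two, that is, $M$ satisfies condition (a) of Definition \ref{df. repdim} with $i = 2$. Passing from $M$ to $\widehat{M}$ only enlarges the approximating category, so the 2-resolution property persists, and each newly adjoined indecomposable injective summand is trivially approximated by itself. By Remark \ref{1}, condition (a) together with the presence of an injective cogenerator is enough to conclude $\mbox{gl.dim End}_B(\widehat{M}) \leq 3$, and hence $\mbox{rep.dim}B \leq 3$.

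Combining the two estimates gives $\mbox{rep.dim}B = 3$. The main difficulty in the argument was already absorbed by the preceding lemmas and the proposition above, which produced short $\mbox{add}(M)$-approximations of arbitrary $B$-modules by lifting to $\mbox{mod}H$ via the BMR-equivalence and controlling the obstruction morphisms using torsion-pair and local-slice arguments; the step at the level of the theorem itself is essentially bookkeeping to match the functorial definition of the representation dimension.
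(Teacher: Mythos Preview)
Your lower-bound argument (infinite type forces $\mbox{rep.dim}B\geq 3$) is fine and matches the paper's implicit use of Auslander's characterization.

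The upper-bound argument has a genuine gap. You pass from $M$ to $\widehat{M}=M\oplus DB$ and assert that ``enlarging the approximating category'' preserves the $2$-resolution property. That implication is false in general. Condition~(a) of Definition~\ref{df. repdim} for $\widehat{M}$ requires, for the sequence $0\to M^{-1}\to M^{0}\xrightarrow{f^{*}} Y\to 0$ you already have, that $\mbox{Hom}_B(\widehat{M},-)$ remain exact on it; equivalently, every map $I\to Y$ with $I$ an indecomposable injective must factor through $f^{*}$. Nothing you have said guarantees this: the preceding proposition only gives surjectivity of $\mbox{Hom}_B(N,f^{*})$ for $N\in\mbox{add}(M)$, not for the new summands you adjoin. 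Your remark that ``each newly adjoined injective is trivially approximated by itself'' addresses approximations \emph{of} the injectives, not the lifting property \emph{from} them, which is what is at stake.

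The paper closes exactly this gap, and in doing so avoids the enlargement altogether: it observes that the indecomposable injectives of $\mbox{mod}B$ are $\mbox{Hom}_{\mathcal C}(\tilde{T},\tau^{2}\tilde{T})$, and since $\tau^{2}T\in\mathcal{F}(\Sigma)$ (using $\mathcal{F}(T)\subset\mathcal{F}(\Sigma)$ and that $\tau T$ has no projective summands), every indecomposable injective already lies in $\mbox{add}(\mathcal{G})\subset\mbox{add}(M)$. The same is checked for projectives. Hence $M$ itself is a generator--cogenerator, Remark~\ref{1} applies directly to $M$, and $\mbox{gl.dim End}_B(M)\leq 3$. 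Once you know $DB\in\mbox{add}(M)$ your $\widehat{M}$ coincides with $M$ up to redundant summands and your argument goes through, but the verification that the injectives are already there is precisely the content you skipped.
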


\begin{proof}
Let $M$ be the module in the above proposition. Observe that if $I$
is injective in mod$B$, then $I =
\mbox{Hom}_{\mathcal{C}}(\tilde{T},\tau^{2} \tilde{T})$, but
$\tau^{2}T \in \mathcal{F}(\Sigma)$, recall $\tau T$ does not have
projective summands and $\mathcal{F}(T) \subset \mathcal{F}(\Sigma)$
due to the hypothesis taken on $\Sigma$ on $H$ , therefore any
indecomposable injective module in mod$B$ is a direct summand of $M$
The same occurs for any indecomposable $P \in \mbox{mod}B$, then $P
= \mbox{Hom}_{B}(\tilde{T},\tilde{T})$ and again $\tilde{T}$ lies in
$\mathcal{F}(\Sigma)$, thus the $\mbox{add}(\mathcal{G})$ contains every
indecomposable projective and every indecomposable injective module.
Then $\mathcal{G}$ is generator-cogenerator for mod$B$ and hence $M$ is a
generator-cogenerator for mod$B$. Moreover, proposition proves that
gl.dim End$_B(M) \leq 3$ and thus rep.dim$B \leq 3$.
\end{proof}

%%%%Aplications%%%%%%%%%%

We are going to see some applications of our main theorem. Let $A$
be basic connected finite dimensional algebra, $A$ is said to be of
minimal infinite type if it is of infinite representation type and
for each vertex $e$ in the quiver of $A$ we have that $A/AeA$ is of
finite type. Recall that for any cluster-tilted algebra $B$ with a
vertex $e$ in the quiver of $B$ we have that $B/BeB$ is again a
cluster-tilted algebra(see \cite{BMR3} section 2).

So, as a consequence of the theorem, we have the following
corollary.

\begin{cor}
Let $B$ minimal cluster-tilted of infinite representation type. Then $\mbox{rep.dim}B = 3$.
\end{cor}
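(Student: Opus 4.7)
The plan is to reduce this corollary to the main theorem by showing that every minimal cluster-tilted algebra of infinite representation type is cluster-concealed; once this is established, the equality $\mbox{rep.dim}B=3$ follows immediately from the theorem just proved.

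The first ingredient I would invoke is the fact cited from \cite{BMR3}: for any vertex $e$ of the quiver of a cluster-tilted algebra $B$, the quotient $B/BeB$ is again cluster-tilted, and corresponds to removing the summand $\tilde{T}_e$ from the cluster-tilting object $\tilde{T}$ and passing to the cluster category of the resulting smaller hereditary algebra. The key claim to prove is then: if $B=\mbox{End}_{\mathcal{C}}(\tilde{T})^{op}$ is cluster-tilted of infinite representation type and is not cluster-concealed, then there exists a vertex $e$ in the quiver of $B$ for which $B/BeB$ is still of infinite representation type, contradicting minimality. The contrapositive, combined with the main theorem, delivers $\mbox{rep.dim}B=3$.

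To prove the key claim I would argue as follows. Since $B$ is of infinite representation type, the underlying hereditary algebra $H$ is non-Dynkin, so $\mbox{mod}H$ admits a genuine postprojective component. Write $T=\bigoplus T_i$ with $T_i$ indecomposable, and choose $H$, within its derived equivalence class, so as to maximise the number of $T_i$ lying in the postprojective component. If $B$ were not cluster-concealed, this choice still leaves at least one indecomposable summand $T_e$ whose representative in the fundamental domain is regular or preinjective. Removing this summand yields by \cite{BMR3} a cluster-tilted algebra $B/BeB$ associated with $\tilde{T}/\tilde{T}_e$ in the cluster category of a hereditary algebra $H''$; because we have discarded only a single non-postprojective vertex while the postprojective component of $H$ remains intact, the underlying valued graph of $H''$ is still non-Dynkin, so $B/BeB$ is of infinite representation type, giving the desired contradiction.

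The main obstacle I anticipate is the final combinatorial step: verifying rigorously that deleting a non-postprojective summand of $\tilde{T}$ leaves a cluster-tilting object over a still non-Dynkin hereditary algebra. This requires combining the BMR correspondence between indecomposable summands of $\tilde{T}$ and simple $B$-modules with the classification of Dynkin diagrams to rule out the degeneration of $H''$ into Dynkin type. Once this combinatorial input is in place, the proof is completed by applying the main theorem to the cluster-concealed algebra $B$.
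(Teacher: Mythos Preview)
Your overall strategy is the same as the paper's: reduce the corollary to the implication ``minimal representation-infinite cluster-tilted $\Rightarrow$ cluster-concealed'' and then invoke the main theorem. The paper, however, does not attempt a direct argument for this implication. The sentence preceding the corollary recalls from \cite{BMR3} that each $B/BeB$ is again cluster-tilted, and the paper then treats the implication as known; the relevant input is the Buan--Reiten--Seven classification \cite{BRS} (present in the bibliography), which identifies the minimal representation-infinite cluster-tilted algebras precisely with the cluster-tame-concealed ones. With that in hand the corollary is immediate.

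The gap you yourself flag in your direct argument is genuine and is not repaired by the maximisation trick. When you delete the summand $T_e$, the hereditary algebra $H''$ underlying $B/BeB$ is the one governing the perpendicular category $T_e^{\perp}$ inside $\mbox{mod}H$; its representation type is controlled by the restriction of the Euler form of $H$ to that subcategory, not by whether certain postprojective indecomposables of $\mbox{mod}H$ ``remain intact'' as objects. For wild $H$ the perpendicular category of a single regular or preinjective exceptional module can perfectly well be of Dynkin type, so nothing in your setup forces $H''$ to be non-Dynkin, and the contradiction does not materialise. Also, your maximisation is over infinitely many hereditary algebras in the derived class and you have not argued that a maximiser with the desired properties exists, nor that the count of postprojective summands is the right invariant to track through the passage to $H''$. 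Pinning all of this down is exactly what \cite{BRS} accomplishes (building on the Happel--Vossieck list of minimal representation-infinite tilted algebras), so the honest completion of your argument is to cite that result, as the paper effectively does, rather than to attempt the combinatorics from scratch.
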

\vspace{.3 cm}

\end{document}